\newtheorem{thm}{Theorem} 
\newtheorem{prop}[thm]{Proposition}
\theoremstyle{definition}
\newcommand{\nc}[2]{\newcommand{#1}{#2}}
\newcommand{\rnc}[2]{\renewcommand{#1}{#2}}
\nc{\wegengruen}{\end{equation}}
\newcommand{\Z}{\mathbb{Z}} 
\newcommand{\N}{\mathbb{N}}
\newcommand{\R}{\mathbb{R}}
\newcommand{\C}{\mathbb{C}}
\newcommand{\hsp}{{\hspace{-1pt}}}
\newcommand{\hs}{{\hspace{1pt}}}
\newcommand{\hH}{\mathcal{H}}
\newcommand{\cO}{\mathcal{O}}
\newcommand{\A}{{\mathcal{A}}}
\newcommand{\F}{{\mathcal{F}}}
\newcommand{\s}{\sigma}
\newcommand{\sa}{\sigma^\alpha}
\newcommand{\dd}{\mathrm{d}}
\newcommand{\im}{\mathrm{i}}
\newcommand{\E}[1]{\mathrm{e}^{\im #1}}  \newcommand{\e}{\mathrm{e}}
\newcommand{\gen}{\mathrm{span}}
\newcommand{\spec}{\mathrm{spec}}
\newcommand{\op}{\mathrm{op}}
\newcommand{\lN}{\ell_2(\N)}
\newcommand{\lZ}{\ell_2(\Z)}
\newcommand{\Tr}{{\mathrm{Tr}}}
\newcommand{\ra}{\rightarrow}
\newcommand{\lra}{\longrightarrow}
\newcommand{\ot}{\otimes}
\newcommand{\ob}{\,\bar\otimes\,}
\renewcommand{\S}{\mathbb{S}^1}
\def\SUq{\cO(\mathrm{SU}_q(2))}
\def\CSU{C(\mathrm{SU}_q(2))}
\def\SU{\mathrm{SU}(2)}
\newcommand{\Uq}{\cO(\mathrm{D}_q)}  
\newcommand{\CUq}{C(\mathrm{D}_q)}  
\newcommand{\UD}{\mathrm{D}}  
\newcommand{\dFq}{\F^{(1)}(\UD_q)} 
\newcommand{\Fq}{\F(\UD_q)} 
\newcommand{\ip}[2]{\langle{#1},{#2}\rangle}  
\newcommand{\dt}{\mbox{$\frac{\partial}{\partial t}$}}
\newcommand{\dz}{\mbox{$\frac{\partial}{\partial z}$}}
\newcommand{\dbz}{\mbox{$\frac{\partial}{\partial \bar z}$}}
\title{Representations of quantum SU(2) operators on a local chart}
\author{
{\sc Elmar Wagner\footnote{
{\it MSC2010:} 46L85, 46L52, 58B32  \ \  
{\it Key Words:} noncommutative function spaces, noncommutative differential calculus, quantum SU(2), Hilbert space representations}
} \\
\normalsize
Instituto de F\'isica y Matem\'aticas\\
\normalsize
Universidad Michoacana de San Nicol\'as de Hidalgo, Morelia, M\'exico\\
\normalsize
e-mail: {\it elmar@ifm.umich.mx}}
\date{}   
\begin{document}
\maketitle

\begin{abstract}
Hilbert space representations of quantum SU(2) by multiplication operators on a 
local chart are constructed, where the local chart is given by tensor products 
of square integrable functions on a quantum disc and on the classical unit circle. 
The actions of generators of quantum SU(2), generators of the opposite algebra, 
and noncommutative partial derivatives are computed on a Hilbert space basis. 
\end{abstract}

\section{Introduction}
In \cite{KW}, a twisted Dirac operator for quantum $\SU$ is constructed by 
using so-called disc coordinates. The starting point of this construction is 
a representation of quantum $\SU$  by multiplication operators 
on the tensor product of $L_2(\S)$ with the quantum disc algebra, where the inner product on the 
latter space is defined by a positive weighted trace. The construction of the Dirac 
operator uses noncommutative first order differential operators satisfying 
a twisted Leibniz rule. Only this twisted Leibniz rule is needed to prove that 
the Dirac operator has bounded twisted commutators with differentiable 
functions, thus  avoiding an explicit description of the Hilbert space actions  
of the involved operators. 

On the other hand, an explicit description of 
the Hilbert space representation of the quantum algebra of differentiable functions 
and the action of the Dirac operator is crucial for determining 
the analytic properties of the twisted spectral triple, for instance 
the spectrum of the Dirac operator, its K-homology class, 
its eigenvectors, and the C*-closure of the quantum algebra. 
The purpose of the present work is to describe the actions 
of all operators occurring in \cite{KW} on a Hilbert space basis. 
Unfortunately, a detailed analysis of the twisted spectral triple from \cite{KW} 
is behind the scope of this paper. 

\section{Quantum SU(2) in disc coordinates} 

Let $q\in(0,1)$. The coordinate ring $\SUq$ of quantum $\SU$ is the *-algebra generated by $c$ and $d$ satisfying 
\begin{eqnarray}
& \label{rels1}
        cd=qdc,\qquad
        c^*d=qdc^*,\qquad
        cc^*=c^*c,&\\
&\label{rels2}
        d^*d+q^2cc^*=1,\qquad dd^*+ cc^*=1.  &
\end{eqnarray}
Classically, i.\,e.~$q=1$,  the universal C*-algebra generated by $c$ and $d$ is isomorphic to $C(\mathbb{S}^3)$. 
The universal C*-closure $\CSU$ of $\SUq$ has been studied in \cite{MNW} and \cite{Wo}. 
Here we will use the fact that it is generated by the operators $c,d\in B(\lN\ob\lZ )$ given by 
\[  \label{cd}
 c\hs (e_n\ot b_k) =  q^n \hs e_{n} \ot b_{k+1}\,, \qquad
 d\hs (e_n\ot b_k) = \sqrt{1 - q^{2(n+1)}} \hs e_{n+1} \ot b_k\, ,   
\]
where  $\{e_n\}_{n\in{\N}}$ and $\{b_k\}_{k\in\Z}$ are orthonormal bases for $\lN$ and $\lZ$, respectively. 
Note that $c$  and $d$ from \eqref{cd} do indeed satisfy the relations \eqref{rels1} and \eqref{rels2}. 
Moreover, the representation of $\SUq$ defined by \eqref{cd} is faithful. 

Let $u: \S\lra \S$,  \,$u(\E{t})=\E{t}$, denote the unitary generator of $C(\S)$. 
On the orthonormal  basis $\{ b_k:=\frac{1}{\sqrt{2\pi}} \E{kt} : k\in\Z\}$ for $L_2(\S)\cong \lZ$, multiplication by $u$ 
becomes the bilateral shift 
\[ \label{u}
u\hs b_k = b_{k+1}\,, \qquad k\in\Z. 
\]

Consider the bounded  operators $z$ and $z^*$ on $\lN$ given by 
\[ \label{z}
 z\hs e_n := \sqrt{1 - q^{2(n+1)}} \hs e_{n+1}\,, \qquad 
  z^*\hs e_n := \sqrt{1 - q^{2n}} \hs e_{n-1}\,. 
\]
Obviously,  $z$ and $z^*$ satisfy the quantum disc relation 
\[ \label{qd} 
    z^* \hs z - q^2 z\hs z^* = 1 - q^2. 
\]
The polynomial *-algebra generated by $z$ and $z^*$ will be denoted by $\Uq$. 
Here and subsequently, $\mathrm{D}:=\{ z\in\C:|z|<1\}$ and 
$\bar{\mathrm{D}}:= \{ z\in\C:|z|\leq 1\}$ stand for the open and closed unit discs, respectively, 
and $\Uq$ will be called quantum disc algebra. 
As shown in \cite{KL},  the C*-closure $\CUq$ of $\Uq$ is isomorphic to the Toeplitz algebra. 
Moreover, \eqref{z} determines the unique (up to unitary equivalence)  faithful irreducible *-representation of $\CUq$, see e.\,g.~\cite{KueW}.

Defining $y\in B(\lN)$ by 
\[   \label{y}
   y \hs e_n   := q^n \hs e_n\,, \quad n\in\N, 
\]
we have 
\[ \label{yz}
  y = \sqrt{1-zz^*}, \qquad yz=qzy, \qquad z^* y = qy z^*. 
\]
In particular,  $y = \sqrt{1-zz^*} \in \CUq$. Finally, with $u$, $z$ and $y$ given in \eqref{u}, 
 \eqref{z}  and  \eqref{y}, 
 the operators $c$ and $d$ in \eqref{cd} can be written
 \[ \label{cdot} 
 c = y \ot u = \sqrt{1-zz^*} \ot u ,\qquad d= z \ot 1. 
 \]
 
The representation \eqref{cdot} corresponds to the classical parametrization 
$$
\psi : \bar \UD \times [-\pi, \pi] \lra \mathbb{S}^3, \qquad \psi(z,t):= (z, \sqrt{1-z\bar z} \hs \e^{\im t}), 
$$
The restriction of $\psi$ to the open set $\UD\times (-\pi, \pi)$ defines a dense coordinate chart for 
$\mathbb{S}^3 \cong\SU$ which is compatible with the standard differential structure on $\mathbb{S}^3$. 
Moreover, the generators $c$ and $d$ correspond in the classical limit $q\ra 1$ to the coordinate functions 
$$
c(z, \sqrt{1-z\bar z} \hs \e^{\im t}) = \sqrt{1-z\bar z} \hs \e^{\im t}, \qquad d(z, \sqrt{1-z\bar z} \hs \e^{\im t}) = z. 
$$
We call them disc coordinates because the discs $\psi_t(\bar\UD):=\{ \psi(z,t) : z\in\bar\UD\}$ for  fixed  $t\in(-\pi, \pi] $
correspond to the 2-dimensional symplectic leaves associated to a known Poisson group structure on $\SU$\cite{CP}.

\section{Differential calculus} 

One of the  differential operators  occurring in \cite{KW} is the classical partial derivative $-\im \dt$ acting on $C^{(1)}(\S)$. 
On the basis vectors $b_k$ from \eqref{u}, one obviously has 
\[
 -\im \dt \hs b_k = k\hs b_k\,, \qquad k\in\Z. 
\]

Our next aim is to describe noncommutative partial derivatives  $\dz$ and $\dbz$ on the quantum disc algebra. 
As in \cite{SSV}, consider the first order differential *-calculus $\dd : \Uq \lra \Omega(\UD_q)$ 
given by  $\Omega(\UD_q) = \dd z\hs   \Uq +  \dd z^* \Uq$ with $\Uq$-bimodule structure 
$$
z\hs  \dd z = q^{-2} \dd z \hs z , \quad z^*\hs  \dd z = q^2  \dd z \hs  z^*, \quad 
z\hs  \dd z^* = q^{-2} \dd z^*\hs  z, \quad  z^* \dd z^* = q^2  \dd z^* \hs z^*, 
$$
satisfying the Leibniz rule $\dd(fg)= \dd(f)\hs g + f\hs \dd(g)$ for all $f,g\in \Uq$. 
We define the partial derivatives $\dz$ and $\dbz$ by 
$$
d(f) =  \dd z \hs  \dz(f)   +\dd z^*\hs  \dbz(f)  , \qquad f\in \Uq. 
$$
On monomials, one gets 
$\dd(z^nz^{*k}) =\sum_{j=0}^{n-1} q^{-2j} \dd z \hs z^{n-1} z^{*k} + \sum_{l=0}^{k-1} q^{-2n+2l} \dd z \hs z^{n} z^{*k-1}$, 
therefore 
$$
\dz (z^nz^{*k}) = q^{-2(n-1)} \hs \mbox{$\frac{1-q^{2n}}{1-q^2}$}  z^{n-1} z^{*k} ,\qquad \dbz(z^nz^{*k}) = q^{-2n}\hs  \mbox{$\frac{1-q^{2k}}{1-q^2}$}  z^{n} z^{*k-1}. 
$$
From \eqref{qd}, it follows that the monomials $z^nz^{*k}$ span the linear space $\Uq$. 
Using the facts that commutators satisfy the Leibniz rule, 
$[z,z]  = [z^*,z^*]  = 0$,  $\frac{1}{1-q^2}[z^*,z]  = y^2$, and $y^2$ commutes with $z$ and $z^*$ in the same way as $\dd z$ and $\dd z^*$ do, 
one readily verifies that 
\[ \label{dzdbz} 
\dz(f) = \mbox{$\frac{1}{1-q^2}$}\hs  y^{-2}\hs  [z^* , f],\qquad \dbz(f) = \mbox{$\frac{-1}{1-q^2}$} \hs y^{-2} \hs [z , f],\qquad f\in\Uq. 
\]
In order extend the partial derivatives to an algebra of differentiable functions, we first observe that, by \eqref{qd} and \eqref{yz}, each 
$f\in \Uq$ can be written 
\[ \label{poly}
f= \sum_{n=0}^N  z^n \hs p_n(y^2)  +  \sum_{n=1}^M p_{-n}(y^2) \hs z^{*n} ,\qquad N,M\in\N, 
\]
with polynomials $p_n$ in one variable. Furthermore, for all functions $f \in C(\spec\{y^2\})$, 
the formulas in \eqref{dzdbz} together with \eqref{yz} give 
\begin{align*}
\dz f(y^2) =  \mbox{$\frac{1}{1-q^2}$}\hs  y^{-2}\hs(z^* f(y^2) -f(y^2) z^*) = -\frac{f(y^2) -f(q^2 y^2)}{y^2-q^2y^2} \hs  z^* = -\nabla_{\! q^2} f(y^2)\hs  z^*,\\
\dbz f(y^2) =  \mbox{$\frac{-1}{1-q^2}$}\hs  y^{-2}\hs(z f(y^2)  - f(y^2)z) = -z\hs \frac{f(y^2) -f(q^2 y^2)}{y^2-q^2y^2}  = -z\hs \nabla_{\! q^2}f(y^2),\
\end{align*}
where
$$
\nabla_{\! q^2} f(x):= \frac{f(x) -f(q^2 x)}{x-q^2x}
$$ 
denotes the $q^2$-difference operator 
and the operators $f(y^2)$ are defined by the spectral calculus of the self-adjoint operator $y^2$. The reason why we prefer 
to consider functions  $f = f(y^2)$ instead of $f=f(y)$ is because $z\mapsto 1-z\bar z$ is a differentiable function on $\bar \UD$ but 
$z\mapsto \sqrt{1-z\bar z}$ is not. Note that 
$$
\spec(y^2) = \{ q^{2n}:n\in \N\} \cup \{0\} 
$$
and 
$\lim_{k\ra \infty} \frac{f(q^{2k}) -f(q^{2k+2})}{q^{2k}-q^{2k+2}} 
= \lim_{k\ra \infty}\big( \frac{1}{1-q^2}\frac{f(q^{2k}) -f(0)}{q^{2k}} 
- \frac{q^2}{1-q^2}\frac{f(q^{2k+2})-f(0)}{q^{2k+2}} \big) =  f'(0)$ 
if the deri\-va\-tive of $f$ in $0$ exists. In that case, 
$\nabla_{\! q^2}\psi(y^2)$ defines a bounded operator on $\lN$. 
This together with \eqref{poly} motivates the following definition of an algebra of differentiable 
functions on the quantum disc: 
\[  \label{dFq}
\dFq := \left\{ \sum_{n=0}^N  z^n \hs f_n(y^2)  +  \sum_{n=1}^M f_{-n}(y^2) \hs z^{*n} \,:\, M,N\in\N,\ \, f_j\in C^{(1)}(\spec(y^2))\right\} . 
\]
Observe that differentiability of  $f\in C^{(1)}(\spec(y^2))$ amounts to the differentiability of $f$ in $0$ which is the only  accumulation point  
of $\spec(y^2)$. Using that $f(y) \in \dFq$ if and only if $f(q^ky) \in \dFq$  for any $k\in \Z$, 
it is easily seen that $\dFq $ is a *-algebra of bounded operators on $\lN$ containing $\Uq$. 
It follows from the (twisted) Leibniz rules 
\begin{align*}
&y^{-2} [\zeta, z^n\hs f(y) ] = y^{-2} [\zeta, z^n] f(y) + q^{-2n} z^n  y^{-2} [\zeta, f(y)], \\ 
&y^{-2} [\zeta,  f(y) z^{*n}  ] = y^{-2} [\zeta, f(y)]  z^{*n} +  f(y)   y^{-2} [\zeta, z^{*n} ], \ \ \, f\in C^{(1)}(\spec(y^2)), \ \,\zeta\in \{z,z^*\}, \ \,n\in\N, 
\end{align*} 
that the actions of $\dz$ and $\dbz$  given by the expressions in \eqref{dzdbz} are well defined on $\dFq$.

\section{Noncommutative integration} 

Let $s$ denote the unilateral shift operator acting on $\lN$ by $s\hs e_e = e_{n+1}$. Using the relations 
\[ \label{sz} 
z=s \hs \sqrt{1-q^2y^2},\quad z^*= s^*\hs \sqrt{1-y^2}  , \quad f(y) \hs s = s\hs f(qy) , \quad  s^*\hs f(y) = f(qy) \hs s^* , 
\]
where $f\in L_\infty(\spec(y)):= \{g: \spec(y) \ra \C: \text{bounded}\}$, one sees that each $\psi \in \dFq$ can be written 
$
\psi = \sum_{n=0}^N  s^n \hs g_n(y^2)  +  \sum_{n=1}^M g_{-n}(y^2) \hs s^{*n}
$
with continuous (but not necessarily differentiable) functions $g_j\in C(\spec(y^2))$.  
For a noncommutative integration theory on the quantum disc, it happens to be more convenient to work 
with the *-algebra of bounded operators 
\[ \label{Fq}
\Fq:= \left\{ \sum_{n=0}^N  s^n \hs g_n(y)  +  \sum_{n=1}^M g_{-n}(y) \hs s^{*n} \,:\, M, N\in\N,\ \, g_j\in  L_\infty(\spec(y))\right\}. 
\]
According to the  above remark, $\dFq\subset \Fq$.

We follow \cite{SSV, KueW, KW} and define for $\alpha>0$ 
\[ \label{Tr} 
\int_{\UD_q}^\alpha \ :\  \Fq \lra \C, \quad  \int_{\UD_q}^\alpha \psi := (1-q) \Tr_{\lN}(\psi\hs y^\alpha ) . 
\]
Since $y^\alpha$ is a positive compact operator for all $\alpha>0$, the weighted trace $\int_{\UD_q}^\alpha$ is a well-defined  positive functional 
on $\Fq$. 
As the trace over weighted shift operators vanishes, we have 
\[   \label{trace} 
\int_{\UD_q}^\alpha \psi = \int_{\UD_q}\left( \sum_{n=0}^N  s^n \hs g_n(y)  +  \sum_{n=1}^M g_{-n}(y) \hs s^{*n} \right) =  (1-q) \sum_{n\in\N} g_0(q^n) q^{\alpha n}. 
\]
In terms of the Jackson integral $\int_0^1 f(y)\dd_q y= (1-q) \sum_{n\in\N} f(q^n)q^n$,  the functional  $\int_{\UD_q}^\alpha$ may be written 
$$
 \int_{\UD_q}^\alpha \left( \sum_{n=0}^N s^n g_n(y) + \sum_{n=1}^M g_{-n} (y) s^{*n}\! \right) = 
 \int_0^1\! \int_{-\pi}^{\pi}  \left( \sum_{n=0}^N \e^{\im n\theta} g_n(y) + \sum_{n=1}^M g_{-n} (y) \e^{-\im n\theta}\right) 
 \dd \theta\, y^{\alpha -1} \dd_q y. 
$$

By \eqref{sz}, the commutation relation between $y^\alpha $ and functions from $\Fq$ can be expressed by 
the automorphism $\sigma^\alpha  : \Fq \lra \Fq$, 
\[ \label{s}
 \s^\alpha\Big(\sum_{n=0}^N  s^n \hs g_n(y)  +  \sum_{n=1}^M g_{-n}(y) \hs s^{*n}   \Big) 
=\sum_{n=0}^N  (q^{-\alpha}s)^n \hs g_n(y)  +  \sum_{n=1}^M g_{-n}(y) \hs (q^\alpha s)^{*n}. 
\]
Then $h\hs  y^\alpha= y^\alpha \s^\alpha(h)$ and hence, by the trace property, 
\[ \label{mod}
\int_{\UD_q}^\alpha gh = (1-q) \Tr_{\lN}(ghy^\alpha)  =  (1-q) \Tr_{\lN}(\s^\alpha (h)  g y^\alpha ) = \int_{\UD_q}^\alpha \s^\alpha(h)g 
\]
for all $f,g\in\Fq$. Note also that 
$$
  (\sa(h))^* = \s^{-\alpha}(h^*), \qquad h\in \Fq,
$$
where $\s^{-\alpha}$ denotes the inverse of $\sa$. 

Using $ \Tr_{\lN}(s^n s^{*k} f(y)y) =0$ if $k\neq n$, one easily verifies that $\int_{\UD_q}$ is faithful. 
Therefore 
\[     \label{ip} 
\ip{f}{g} := \int_{\UD_q}^\alpha f^* g, \qquad f,g\in\Fq, 
\]
defines  an inner product on $\Fq$. Its Hilbert space closure of will be denoted by $L_2(\UD_q)$. 

\section{Representations on the quantum disc} 

The left multiplication of  $\Uq$ on $\Fq$  defines a bounded *-representation of $\Uq$  on $L_2(\UD_q)$ 
since 
$$
\ip{xf}{g} =  \int_{\UD_q}^\alpha f^*\hs x^* g = \ip{f}{x^*g} , \qquad f,g\in\Fq,\quad x\in\Uq. 
$$

In \cite{KW}, representations of the opposite algebra $\Uq^\op$ play also  a crucial role in the definition 
of the Dirac operator. Recall that the opposite algebra $\A^\op$ of an algebra $\A$ is the vector space $\A$ with the opposite multiplication 
$ a^\op \, b^\op = (ba)^\op$. 
$\A^\op$ remains to be  a *-algebra if $\A$ is one. 
The right multiplication of $\Uq$ on $\Fq$ defines a representation of 
$\Uq^\op$ which is not a *-re\-pre\-sen\-ta\-tion because 
\[   \label{opstar}
\ip{x^\op\hs f}{g} = \ip{fx}{g} =   \int_{\UD_q}^\alpha x^* f^* g  =   \int_{\UD_q}^\alpha  f^* g\hs  \s^{-\alpha}( x^*) 
= \ip{f}{  \s^{-\alpha}( x^*)^\op g} 
\]
by \eqref{mod}. 

Our next aim is to construct an orthonormal basis for $L_2(\UD_q)$. To this end, 
we introduce the following notation:   As customary,  the symbol $\delta_{nm}$ denotes the Kronecker delta.
For  $k\in\Z$, let  $\delta_{q^{k}}  : \R \lra \{0,1\}$  be defined by 
 $$
\delta_{q^{k}}(t) :=    \left\{   \begin{array}{rl} 1, & t = q^{k} ,   \\ 0,  & t \neq q^{k} .      \end{array}         \right.   
$$
If $a$ is a densely defined operator, we set 
\[
   a^{\#k} := \left\{   \begin{array}{rl} a^k, &k \geq 0,   \\ a^{*k},  & k< 0 .    \end{array}         \right.   
\]  

Note that  $\delta_{q^{n}}$  is continuous on $\spec(y)$ and $\delta_{q^{n}}(y)$ is the orthogonal projection 
onto the one-dimen\-sional subspace $\gen\{e_n\} \subset \lN$. 
For any (measurable) function $f: \spec(y) \lra \C$, we have 
\[    \label{fd}
f(y)\hs  \delta_{q^{n}}(y)  =  \delta_{q^{n}}(y) \hs f(y)=  f(q^n)\hs \delta_{q^{n}}(y). 
\]
In particular, $\delta_{q^{m}}(y)  \delta_{q^{n}}(y) = \delta_{mn}\,\delta_{q^{n}}(y)$. 
Furthermore,  it follows from \eqref{sz}  that 
\[ \label{sds}
s \hs \delta_{q^{n}}(y)  =  \delta_{q^{n}}(q^{-1}y)  \hs s =  \delta_{q^{n+1}}(y)  \hs s, \quad 
s^* \hs \delta_{q^{n}}(y)  =  \delta_{q^{n}}(q\hs y)  \hs s^* =  \delta_{q^{n-1}}(y)  \hs s^*. 
\]
As a consequence, for all $k,n\in\N$ such that $n<k$, 
\[ \label{nk0}
s^{*k}  \hs \delta_{q^{n}}(y) =  \delta_{q^{n-k}}(y) \hs s^{*k}  =  0, \quad \delta_{q^{n}}(y)  \hs  s^{k}  =   s^{k}   \hs  \delta_{q^{n-k}}(y)= 0, 
\]
since $\delta_{q^{n-k}}(t) = 0$ on $\spec(y)$. 
From $ss^* = 1-  \delta_{q^{0}}(y)$, we get 
\[ \label{ssd} 
  s s^{*k}  \hs \delta_{q^{n}}(y) = (1-  \delta_{q^{0}}(y)) s^{*k-1}  \hs \delta_{q^{n}}(y) 
  =  s^{*k-1} \hs  (1-  \delta_{q^{k-1}}(y))  \hs \delta_{q^{n}}(y) =  s^{*k-1}  \hs \delta_{q^{n}}(y)
\]
as we may assume that $n> k-1$ by \eqref{nk0}.  Note that $s^{ \# k}   \hs  s^{\# l}  = s^{\# k+l}$ does not hold in general, 
for instance $s^{ \# 1}   \hs  s^{\# -1} = 1-  \delta_{q^{0}}(y) \neq s^{\# 0}$. However, from \eqref{ssd} and $s^* s=1$, we conclude that 
\[  \label{lk}
s^{\# l}  s^{\# k}  \hs \delta_{q^{n}}(y) =  s^{\# l+k}  \hs \delta_{q^{n}}(y)\quad \text{for all } \  k,l\in\Z, \ \ n\in\N. 
\]
Using \eqref{ip},  \eqref{lk}, \eqref{mod}, \eqref{fd} and \eqref{trace}, 
we compute  
\[   \label{onb}
\ip{  s^{\# k} \hs \delta_{q^{n}}(y)}{  s^{\# l} \hs \delta_{q^{m}}(y) } =  \int_{\UD_q}^\alpha\!  \delta_{q^{n}}(y) \hs  s^{ \# -k}   \hs  s^{\# l}  \delta_{q^{m}}(y) 
=  \int_{\UD_q}^\alpha \! s^{\# l-k}  \hs   \delta_{q^{m}}(y)  \hs    \delta_{q^{n}}(y) =(1\hsp - \hsp q) q^{\alpha n} \delta_{nm} \delta_{kl}. 
\]
The last equation is the key to constructing an orthonormal basis for $L_2(\UD_q)$. 
\begin{prop} \label{P}
For $n\in\N$ and $k\in\Z$ such that $ k \geq -n$, define 
\[ \label{deta}
  \eta_{nk} :=   \mbox{$\frac{q^{-\alpha n/2}}{\sqrt{1-q}}$} \hs s^{\# k}  \hs \delta_{q^{n}}(y). 
\]
Then $\{ \eta_{nk}  \,:  \,n\in\N, \ \, k\in\Z,\ \,  k \geq -n\} $ is an orthonormal basis for $L_2(\UD_q)$. 
\end{prop}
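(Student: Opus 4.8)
The plan is to verify the two defining properties of an orthonormal basis separately: first that the family $\{\eta_{nk}\}$ is orthonormal, and second that its closed linear span is all of $L_2(\UD_q)$. The orthonormality is essentially already contained in \eqref{onb}; the genuine work lies in the completeness, where I would reduce to the dense subspace $\Fq$ and decompose its generators into the $\eta_{nk}$ using the commutation relations for $s$, $s^*$ and the spectral projections $\delta_{q^n}(y)$.

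For orthonormality I would simply insert the normalization factor $\frac{q^{-\alpha n/2}}{\sqrt{1-q}}$ from \eqref{deta} into \eqref{onb}. Since this factor is a real positive scalar, one obtains
$$
\ip{\eta_{nk}}{\eta_{ml}} = \frac{q^{-\alpha n/2}}{\sqrt{1-q}}\cdot\frac{q^{-\alpha m/2}}{\sqrt{1-q}}\cdot(1-q)\hs q^{\alpha n}\hs\delta_{nm}\hs\delta_{kl} = \delta_{nm}\hs\delta_{kl},
$$
because the Kronecker delta $\delta_{nm}$ forces $q^{-\alpha n/2}q^{-\alpha m/2}=q^{-\alpha n}$, which cancels the factor $q^{\alpha n}$, while $\frac{1}{1-q}(1-q)=1$. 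Thus $\{\eta_{nk}\}$ is an orthonormal system, and it remains to show that its closed linear span $V$ equals $L_2(\UD_q)$.

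For completeness I would use that $\Fq$ is dense in $L_2(\UD_q)$ by construction, so it suffices to prove $\Fq\subseteq V$. By linearity it is enough to treat the two types of generators, namely $s^k g(y)$ with $k\in\N$ and $g(y)\hs s^{*k}$ with $k\geq 1$, where $g\in L_\infty(\spec(y))$. Writing $g(y)=\sum_{n\in\N}g(q^n)\hs\delta_{q^n}(y)$ by the spectral calculus and moving the projections past the shifts via \eqref{sds} (and using the vanishing relations \eqref{nk0} for the $s^*$ case), I would obtain
$$
s^k g(y) = \sum_{n\in\N} g(q^n)\hs s^{\# k}\delta_{q^n}(y), \qquad g(y)\hs s^{*k} = \sum_{n\geq k} g(q^{n-k})\hs s^{\#(-k)}\delta_{q^n}(y).
$$
After reinserting the normalization, the partial sums are finite linear combinations of the $\eta_{nk}$; note that the constraint $k\geq -n$ in \eqref{deta} matches precisely the range of nonvanishing terms, since \eqref{nk0} kills $s^{*k}\delta_{q^n}(y)$ exactly when $n<k$.

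The remaining and most delicate point is convergence of these series in $L_2(\UD_q)$, and here the orthonormality just established does the work. Each tail $\sum_{n>N}$ is again a concrete element of $\Fq$ of the form $s^k g_N(y)$ (respectively $g_N(y)\hs s^{*k}$), where $g_N\in L_\infty(\spec(y))$ is the truncation of $g$, so by \eqref{onb} its squared norm equals $\sum_{n>N}|g(q^n)|^2(1-q)\hs q^{\alpha n}$ (with a shifted index $n-k$ in the second case). Since $g$ is bounded and $q^\alpha<1$, this is dominated by $\|g\|_\infty^2\hs(1-q)\sum_{n>N}q^{\alpha n}$, a geometric tail tending to $0$; hence the series converge in norm to $s^k g(y)$ and $g(y)\hs s^{*k}$, respectively. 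This places all generators, and therefore all of $\Fq$, inside $V$, so $V=\overline{\Fq}=L_2(\UD_q)$. The step requiring the most care is precisely this one: one must confirm that the abstract $L_2$-limit of the partial sums really is the operator $s^k g(y)$, which is exactly what identifying the tail as an element of $\Fq$ and computing its weighted-trace norm via \eqref{onb} achieves; the index bookkeeping when commuting $s^{*k}$ past $\delta_{q^n}(y)$ is the other place where attention is needed, but it is routine given \eqref{sds}–\eqref{lk}.
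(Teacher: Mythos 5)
Your proof is correct and follows essentially the same route as the paper: orthonormality via \eqref{onb} with the normalization factor, and completeness by expanding the generators $s^k g(y)$ and $g(y)\hs s^{*k}$ of the dense subspace $\Fq$ through the spectral decomposition $g(y)=\sum_n g(q^n)\hs\delta_{q^n}(y)$ and the shift relations \eqref{sds}--\eqref{nk0}, with convergence guaranteed by the square-summability of the coefficients $q^{\alpha n/2}g(q^n)$. Your extra care in identifying each tail as a concrete element of $\Fq$ and bounding its weighted-trace norm makes explicit a step the paper leaves implicit, but it is the same argument.
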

\begin{proof}
It follows from \eqref{onb} that the set defined in the proposition is orthonormal. Therefore it only remains 
to show that it is complete.  
As $L_2(\UD_q)$ is the closure of $\Fq$, it suffices to show that the elements $s^k\hs f(y)$ and $f(y) \hs s^{*k}$ 
can be expanded in this basis, where $f\in  L_\infty(\spec(y))$ and $k\in\N$. 
Since  $\delta_{q^{n}}(y)$ is the orthogonal projection 
onto  the eigenspace corresponding to the eigenvalue $q^n$ of the self-adjoint operator $y$, 
the spectral theorem gives $f(y) = \sum_{n\in\N} f(q^n) \hs \delta_{q^{n}}(y)$. 
Thus 
$s^k\hs f(y) = \sqrt{1-q}\hs  \sum_{n\in\N} q^{\alpha n/2}   \hs  f(q^n)  \hs  \eta_{nk}$  
and the series converges in $L_2(\UD_q)$ since $f$ is bounded and $\{q^{\alpha n/2}\}_{n\in\N} \in\lN$. 
Similarly, by \eqref{nk0}, 
$$
f(y) \hs s^{*k} =  \sum_{n\in\N} f(q^n) \hs \delta_{q^{n}}(y) \hs s^{*k}  =  \sum_{n\in\N} f(q^n) \hs s^{*k} \hs \delta_{q^{n+k}}(y) 
=  \sqrt{1-q}\hs q^{\alpha k/2}  \hs  \sum_{n\in\N} q^{\alpha n/2}   \hs  f(q^n)  \hs  \eta_{n+k,k}\, , 
$$ 
where the sequence of coefficients $\{q^{\alpha n/2} \hs  f(q^n)  \}_{n\in\N}$ belongs to  $\in\lN$.  
\end{proof} 

We will now compute the actions of our noncommutative multiplication and partial differential operators on this basis. 
To begin, 
\[ \label{yeta}
y \hs   \eta_{nk} =  \mbox{$\frac{q^{-\alpha n/2}}{\sqrt{1-q}}$} \hs y \hs  s^{\# k}  \hs \delta_{q^{n}}(y) 
= q^k \hs  \mbox{$\frac{q^{-\alpha n/2}}{\sqrt{1-q}}$} \hs  s^{\# k}\hs y   \hs \delta_{q^{n}}(y) 
=  q^{n+k} \hs  \mbox{$\frac{q^{-\alpha n/2}}{\sqrt{1-q}}$} \hs  s^{\# k}\hs \delta_{q^{n}}(y) 
= q^{n+k} \hs  \eta_{nk}\,,  
\]
where we used \eqref{sz} in the second equality and \eqref{fd} in the third. Similarly, 
\[
y^\op \hs   \eta_{nk} =   \eta_{nk} \hs y
= \mbox{$\frac{q^{-\alpha n/2}}{\sqrt{1-q}}$} \hs  s^{\# k}  \hs \delta_{q^{n}}(y) \hs y 
=  q^n\hs  \mbox{$\frac{q^{-\alpha n/2}}{\sqrt{1-q}}$} \hs  s^{\# k}  \hs \delta_{q^{n}}(y)
= q^{n} \hs  \eta_{nk}\,. 
\]
Writing $z= s\hs \sqrt{1-q^2 y^2}$, it follows from \eqref{yeta} and \eqref{lk} that 
\begin{align}\nonumber
z \hs   \eta_{nk}  &=   \mbox{$\frac{q^{-\alpha n/2}}{\sqrt{1-q}}$} \hs s\hs \sqrt{1-q^2 y^2} \hs  s^{\# k}  \hs \delta_{q^{n}}(y) 
=  \sqrt{1-q^{2(n+k+1)}} \hs \mbox{$\frac{q^{-\alpha n/2}}{\sqrt{1-q}}$} \hs s^{\# k+1}  \hs \delta_{q^{n}}(y) \\
& =  \sqrt{1-q^{2(n+k+1)}} \hs \eta_{n,k+1}\, ,   \label{zeta} 
\end{align}
and analogously, for $z^*= s^*\hs \sqrt{1-y^2}$, 
\begin{align}\nonumber
z^* \hs   \eta_{nk}  &=   \mbox{$\frac{q^{-\alpha n/2}}{\sqrt{1-q}}$} \hs s^*\hs \sqrt{1- y^2} \hs  s^{\# k}  \hs \delta_{q^{n}}(y) 
=  \sqrt{1-q^{2(n+k)}} \hs \mbox{$\frac{q^{-\alpha n/2}}{\sqrt{1-q}}$} \hs s^{\# k-1}  \hs \delta_{q^{n}}(y)\nonumber  \\
& =  \sqrt{1-q^{2(n+k)}} \hs \eta_{n,k-1}\, ,  \label{zetastar}
\end{align}
The actions of $z^\op$ and $z^{*\op}$ are calculated by applying \eqref{sds}, \eqref{fd} and \eqref{deta}:  
\begin{align}\nonumber
z^\op \hs   \eta_{nk}  =   \eta_{nk} \hs z 
 &= \mbox{$\frac{q^{-\alpha n/2}}{\sqrt{1-q}}$}  \hs  s^{\# k}  \hs \delta_{q^{n}}(y) \hs s\hs \sqrt{1-q^2 y^2} 
  = \mbox{$\frac{q^{-\alpha n/2}}{\sqrt{1-q}}$}  \hs  s^{\# k+1}  \hs \delta_{q^{n-1}}(y)\hs \sqrt{1-q^2 y^2} \nonumber\\ 
 &= q^{-\alpha/2}\hs  \sqrt{1-q^{2n}} \, \mbox{$\frac{q^{-\alpha (n-1)/2}}{\sqrt{1-q}}$} \hs s^{\# k+1}  \hs \delta_{q^{n-1}}(y)\nonumber \\ 
 &=  q^{-\alpha/2}\hs  \sqrt{1-q^{2n}}\, \eta_{n-1,k+1} \,,  \label{zetop} 
\end{align}
\begin{align}\nonumber
z^{*\op} \hs   \eta_{nk}  =   \eta_{nk} \hs z^* 
 &= \mbox{$\frac{q^{-\alpha n/2}}{\sqrt{1-q}}$}  \hs  s^{\# k}  \hs \delta_{q^{n}}(y) \hs s^* \hs \sqrt{1- y^2} 
  = \mbox{$\frac{q^{-\alpha n/2}}{\sqrt{1-q}}$}  \hs  s^{\# k-1}  \hs \delta_{q^{n+1}}(y)\hs \sqrt{1- y^2} \nonumber\\ 
 &= q^{\alpha/2}\hs  \sqrt{1-q^{2(n+1)}} \, \mbox{$\frac{q^{-\alpha (n+1)/2}}{\sqrt{1-q}}$} \hs s^{\# k+1}  \hs \delta_{q^{n+1}}(y)\nonumber  \\ 
 &=  q^{\alpha/2}\hs  \sqrt{1-q^{2(n+1)}}\, \eta_{n+1,k-1}\, .       \label{zetaopstar}
\end{align}
Note that $(z^\op)^* = q^{-\alpha} z^{*\op}  = (\s^{-\alpha} (z^*))^{\op}$ as observed in \eqref{opstar}. 

By Equation \eqref{dFq} and the remark following it, the basis vectors $ \eta_{nk} $ belong to $\dFq$. 
As eigenvectors of the self-adjoint operator $y$, they also belong to the domain of the unbounded operator $y^{-2}$. 
We will use the formulas in \eqref{dzdbz} to compute the actions of $\dz$ and $\dbz$ on $ \eta_{nk}$. 
From \eqref{yeta}, \eqref{zetastar} and \eqref{zetaopstar}, it follows that 
\begin{align} \nonumber
\dz\hs \eta_{nk} &=  \mbox{$\frac{1}{1-q^2}$}\hs  y^{-2}\hs(z^*\hs  \eta_{nk} -\eta_{nk}\hs  z^*) 
= \mbox{$\frac{1}{1-q^2}$}\hs  y^{-2}\hs(z^*\hs  \eta_{nk} -    z^{*\op}  \hs \eta_{nk}) \\
&= \mbox{$\frac{q^{-2(n+k)}}{1-q^2}$}\big(  
 q^{2}\hs \sqrt{1-q^{2(n+k)}} \hs \eta_{n,k-1} 
 -   q^{\alpha/2} \hs  \sqrt{1-q^{2(n+1)}}\, \eta_{n+1,k-1} \big) . 
\end{align} 
Further, by  \eqref{yeta}, \eqref{zeta} and \eqref{zetop},
\begin{align} \nonumber
\dbz\hs \eta_{nk} &=  \mbox{$\frac{-1}{1-q^2}$}\hs  y^{-2}\hs(z\hs  \eta_{nk} -\eta_{nk}\hs  z) 
= \mbox{$\frac{-1}{1-q^2}$}\hs  y^{-2}\hs(z \hs  \eta_{nk} -    z^{\op}  \hs \eta_{nk}) \\
&= -  \mbox{$\frac{q^{-2(n+k)}}{1-q^2}$}\big(  
 q^{-2}\hs \sqrt{1-q^{2(n+k+1)}} \hs \eta_{n,k+1} 
 -   q^{-\alpha/2} \hs  \sqrt{1-q^{2n}}\, \eta_{n-1,k+1} \big) .
\end{align} 

To compare the representations of $z$ and $z^*$ with the irreducible ones from \eqref{z},  
it is convenient to change the basis. For $n,k\in\N$,  set 
$e_{nk}:= \eta_{n,k-n}$. As an immediate consequence of Propo\-sition~\ref{P},  $\{e_{nk}: n,k\in\N\}$ is an 
orthonormal basis for $L_2(\UD_q)$ and the change of basis is given by the unitary operator 
$$
U:L_2(\UD_q)\lra L_2(\UD_q), \quad U\hs e_{nk} = \eta_{n,k-n}, \quad U^*\hs \eta_{nk} = e_{n,k+n}\,. 
$$
In the new basis, we have 
$$
y\hs e_{nk} = U^*\hs y \hs \eta_{n,k-n} = q^{k} \hs  U^*\hs \eta_{n,k-n} = q^{k}\hs e_{nk}\,, 
$$
\[       \label{ze}
z\hs e_{nk} = U^*\hs z \hs \eta_{n,k-n} 
=  \sqrt{1-q^{2(k+1)}} \hs U^*\hs \eta_{n,k-n+1} 
= \sqrt{1-q^{2(k+1)}} \hs e_{n,k+1} \,, 
\]
\[       \label{zestar}
z^*\hs e_{nk} = U^*\hs z^* \hs \eta_{n,k-n} 
=  \sqrt{1-q^{2k}} \hs U^*\hs \eta_{n,k-n-1} 
= \sqrt{1-q^{2k}} \hs e_{n,k-1} \,.
\]
In particular, on each Hilbert space 
\[ \label{Hn} 
\hH_n\,:=\, \gen\{ e_{nk} : k\in\N\}\,\cong \,\lN, 
\]
we recover the unique 
irreducible *-representations  \eqref{z} of the quantum disc $\Uq$. Further, for the opposite operators, one gets 
\[ \label{yop} 
y^\op \hs e_{nk} = U^*\hs y^\op \hs \eta_{n,k-n} = q^{n} \hs  U^*\hs \eta_{n,k-n} = q^{n}\hs e_{nk}\,, 
\] 
\[       \label{zeop}
z^\op  \hs e_{nk} = U^*\hs z^\op  \hs \eta_{n,k-n} 
= q^{-\alpha/2}\hs  \sqrt{1-q^{2n}}\, U^*\hs  \eta_{n-1,k-n+1}
= q^{-\alpha/2}\hs  \sqrt{1-q^{2n}}\,  e_{n-1,k}\,, 
\]
\[       \label{zestarop}  
z^{*\op}  \hs e_{nk} = U^*\hs z^{*\op}  \hs \eta_{n,k-n} 
= q^{\alpha/2}\hs  \sqrt{1-q^{2(n+1)}}\, U^*\hs  \eta_{n+1,k-n-1}
= q^{\alpha/2}\hs  \sqrt{1-q^{2(n+1)}}\, e_{n+1,k}\,. 
\] 
Now, setting $\zeta^\op := q^{\alpha/2} z^\op $, \,$\zeta^{*\op} := q^{-\alpha/2} z^{*\op}$ 
and 
\[ \label{Hop} 
\hH_k^\op\,:=\, \gen\{ e_{nk} : n\in\N\}\,\cong\, \lN, 
\] 
we obtain an irreducible 
*-representation of $\Uq^\op$ on $\hH_k^\op$, that is, $(\zeta^\op)^*= \zeta^{*\op}$ and 
\[  \label{zetaop} 
\zeta^\op\hs  \zeta^{*\op}  - q^2\hs  \zeta^{*\op}\hs  \zeta^{\op}  = 1- q^2. 
\] 
Note that we have of course 
\[ \label{fgop}
[f,g^\op] = f\hs g^\op - g^\op  \hs f =0 \quad \text{for all } \ f\in \Uq,  \ \ g^\op \in \Uq^\op, 
\] 
since the elements of $\Uq$ act only on the second index of $e_{nk}$ 
and  the elements of $\Uq^\op$ act only on the first. 

We compute the actions of $\dz$ and $\dbz$ on $e_{nk}$  by using the commutator representation 
\eqref{dzdbz} again. Thus 
$$
\dz\hs e_{nk} = \mbox{$\frac{1}{1-q^2}$}\hs  y^{-2}\hs(z^*\hs  e_{nk} -    z^{*\op}  \hs e_{nk}) 
=  \mbox{$\frac{q^{-2k}}{1-q^2}$}\hs  \big( 
q^2 \hs  \sqrt{1-q^{2k}} \hs e_{n,k-1}   - q^{\alpha/2}\hs  \sqrt{1-q^{2(n+1)}}\, e_{n+1,k} \big) ,
$$
$$
\dbz\hs e_{nk} = \mbox{$\frac{-1}{1-q^2}$}\hs  y^{-2}\hs(z\hs  e_{nk} -    z^{\op}  \hs e_{nk}) 
= -  \mbox{$\frac{q^{-2k}}{1-q^2}$}\hs  \big( 
q^{-2} \sqrt{1-q^{2(k+1)}} \hs e_{n,k+1}   - q^{-\alpha/2}  \sqrt{1-q^{2n}}\, e_{n-1,k} \big) . 
$$

Finally recall that \eqref{cd} defines a faithful *-representation of $\CSU$,  
and  consider the Hilbert space $L_2(\UD_q)\ob L_2(\S)$ with 
orthonormal basis $\{ e_{nkl}: n,k\in\N,\ l\in\Z\}$, where 
\[ \label{basis}
e_{nkl}:= e_{nk}\ot b_l =  
\mbox{$\frac{q^{-\alpha n/2}}{\sqrt{1-q}}$} \hs s^{\# k-n}  \hs \delta_{q^{n}}(y)  
\ot \mbox{$\frac{1}{\sqrt{2\pi}}$}  \E{l t} .
\] 
As in \eqref{cdot}, we define a *-representation of $\SUq$ on $L_2(\UD_q)\ob L_2(\S)$ 
by setting 
\[  \label{repcd}
 c:= y \ot u = \sqrt{1-zz^*} \ot u , \qquad d:= z \ot 1. 
\]
It follows from \eqref{ze}--\eqref{Hn} that $L_2(\UD_q)\,\bar\ot\, L_2(\S)$  decomposes into the direct sum 
$$
L_2(\UD_q)\,\bar\ot\,L_2(\S) = \bigoplus_{n\in\N} \hH_n\, \bar  \otimes \, L_2(\S) \cong  \bigoplus_{n\in\N} \lN\,\bar\ot\, \lZ, 
$$
and on each copy of $ \lN\,\bar\ot\, \lZ$, we recover the representation from \eqref{cd}. 
Hence the representation \eqref{repcd} decomposes into the infinite orthogonal sum of *-representations which are all 
unitarily equivalent to \eqref{cd}. In particular, \eqref{repcd} defines a faithful *-representation of $\SUq$ and the 
C*-closure of the *-algebra generated by $c$ and $d$ from \eqref{repcd} is isomorphic to $\CSU$. 

To obtain *-representations of the opposite algebras $\SUq^{\op}$ and $\CSU^{\op}$, the generators $c^\op$ and $d^\op$ 
have to satisfy the opposite relations of \eqref{rels1} and \eqref{rels2}. 
Analogous to $d= z \ot 1$ and $c = \sqrt{1- z z^* }  \ot  u$ with $z$ satisfying \eqref{qd} and $u$ being a unitary generator of $C(\S)$, 
a *-re\-pre\-sen\-ta\-tion of  $\SUq^{\op}$ is given by replacing $z$ by $\zeta^\op$ 
satisfying \eqref{zetaop} (the opposite relation of \eqref{qd}) and  $u$ by a unitary operator $u^\op$ with the same 
properties as $u$.  For $u^\op$ we may take $u^*$ which generates the same C*-algebra $C(\S)$ as $u$ does. 
The adjoint of $u$ is chosen for consistency because then $c$ and $d$ act as forward shifts,  and 
$c^\op$ and $d^\op$  act as backward shifts. For $\zeta^\op = q^{\alpha/2} z^\op$  and $\zeta^{*\op} = q^{-\alpha/2} z^{*\op}$,   
as defined in the paragraph after Equation \eqref{zestarop}, we get $\sqrt{1- \zeta^\op\hs  \zeta^{*\op}} =\sqrt{1- z^\op\hs  z^{*\op}} = y^\op$.  
Thus, setting 
\[ \label{repcdop} 
     c^\op := y^\op \ot u^* = \sqrt{1- \zeta^\op\hs  \zeta^{*\op}}\ot u^*,  \qquad   d^\op  :=  \zeta^\op \ot 1 , 
\]  
yields a *-representation of $\SUq^{\op}$ on $L_2(\UD_q)\,\bar\ot\,L_2(\S)$. 
Moreover, with $\hH_k^\op$ defined in \eqref{Hop}, we have the Hilbert space decomposition 
$$
L_2(\UD_q)\,\bar\ot\,L_2(\S) = \bigoplus_{k\in\N} \hH_k^\op \, \bar  \otimes \, L_2(\S) \cong  \bigoplus_{k\in\N} \lN\,\bar\ot\, \lZ, 
$$ 
and, by  \eqref{yop}--\eqref{zestarop}  and \eqref{u},  
$c^\op$ and $d^\op$  act  on each 
$\hH_{k_0}^\op \, \bar  \otimes \, L_2(\S) =\gen \{ e_{nk_0} \ot b_l : n\in\N, \ l\in\Z \}$ by 
\[  \label{cdop}
 c^\op\hs (e_{nk_0} \ot b_l) =  q^n \hs e_{nk_0} \ot b_{l-1}, \qquad
 d^\op \hs (e_{nk_0} \ot b_l) = \sqrt{1 - q^{2(n-1)}} \hs e_{n-1,k_0} \ot b_l\,. 
\]
Note that \eqref{cdop} are the adjoint relations of \eqref{cd}, and that 
taking the opposite relations of \eqref{rels1} and \eqref{rels2} amounts to interchanging $c$ and $d$ with 
their adjoints $c^*$ and $d^*$. Therefore, as much as \eqref{cd} defines a faithful *-representation 
of $\SUq$ and $\CSU$, \eqref{cdop} yields a faithful *-repre\-sen\-tation  of 
$\SUq^{\op}$ and $\CSU^{\op}$ and so does their direct sum representation on $L_2(\UD_q)\,\bar\ot\,L_2(\S)$. 
Finally we remark that the operators $c$ and $d$ from  \eqref{repcd} commute with the 
operators $c^\op$ and $d^\op$ from  \eqref{repcdop} since $u$ and $u^*$ belong to the commutative C*-algebra $C(\S)$,  
and the operators in the left factor of the tensor products commute by \eqref{fgop}.  
As a consequence, the representations of $\CSU$ and $\CSU^\op$ on $L_2(\UD_q)\,\bar\ot\,L_2(\S)$ 
commute. 

For the convenience of the reader, we finish the paper by 
collecting the most important formulas in a final theorem. 

\begin{thm}  Let $z,z^*\in B(\lN)$  denote the generators of the quantum disc $\Uq$ given in \eqref{z} 
and let $y = \sqrt{1-z\hs z^*}$. 
With $\Fq$ defined in \eqref{Fq}, the *-algebra  $\Fq \ot L_\infty(\S)$ 
is considered an algebra of bounded functions on 
a local  chart for quantum SU(2) in the following sense: 
Define an inner product on $\Fq \ot L_\infty(\S)$ by 
$$
\ip{f\ot \phi}{g\ot \psi} \, := \, \int_{\UD_q}^\alpha \! f^*g\, \int_{\S}\! \bar \phi \hs \psi \hs \dd \lambda 
\, = \, (1\hsp -\hsp q) \Tr_{\lN}(f^*g\hs y^\alpha )\int_{\S}\! \bar \phi \hs \psi \hs \dd \lambda , \quad \alpha >0, 
$$
where $\lambda$ stands for the Lebesgue measure on $\S$. 
The Hilbert space completion of $\Fq \ot L_\infty(\S)$  will be denoted by $L_2(\UD_q)\,\bar\ot\, L_2(\S)$. 
Furthermore, with  $\dFq$ described in \eqref{dFq}, 
the *-sub\-algebra $\dFq\ot C^{(1)}(\S)$ of  $\Fq \ot L_\infty(\S)$  is  viewed as an algebra of differentiable functions on the local chart. 
The Hilbert space $L_2(\UD_q)\,\bar\ot\, L_2(\S)$ has an orthonormal basis of 
differentiable functions $\{ e_{nkl} :  n,k\in\N,\ l\in\Z\} \subset \dFq\ot C^{(1)}(\S)$, where 
$$
e_{nkl} = 
\mbox{$\frac{q^{-\alpha n/2}}{\sqrt{1-q}}$} \hs s^{\# k-n}  \hs \delta_{q^{n}}(y)  
\ot \mbox{$\frac{1}{\sqrt{2\pi}}$}  \E{l t} .
$$
The left multiplication $x\,(f\ot \psi) := xf\otimes \psi$, \,$x\hsp\in\hsp\Uq$, \,$f\ot \psi\hsp\in\hsp \Fq \ot L_\infty(\S)$,  
defines a\linebreak[2]  bounded *-repre\-sen\-tation of $\Uq$ 
 on $L_2(\UD_q)\,\bar\ot\, L_2(\S)$.  On the above orthonormal basis, the opera\-tors $z$, $z^*$ and $y$ act by 
 $$
z \hs e_{nkl}  =   \sqrt{1-q^{2(k+1)}} \hs e_{n,k+1,l} \,, \quad z^* \hs e_{nkl}  =   \sqrt{1-q^{2k}} \hs e_{n,k-1,l}\,,\quad 
y\hs e_{nkl} = q^k \hs e_{nkl} \,. 
 $$
 The right multiplication $x^\op\,(f\ot \psi) := fx\otimes \psi$, \,$x \in \Uq^\op$, \,$f\ot \psi \in \Fq \ot L_\infty(\S)$, 
defines a bounded repre\-sen\-tation of $\Uq^\op$ 
 on $L_2(\UD_q)\,\bar\ot\, L_2(\S)$ which is not a *-representation. 
 The actions of  $z^\op$, $z^{*\op} = q^{\alpha }(z^\op)^* $ and $y^\op = (y^\op)^*$ are given by 
 $$
z^\op \hs e_{nkl}   = q^{-\alpha/2}\hs  \sqrt{1-q^{2n}}\,  e_{n-1,kl}\,, \quad 
z^{*\op} \hs e_{nkl}   = q^{\alpha/2}\hs  \sqrt{1-q^{2(n+1)}}\,  e_{n+1,kl}\,, \quad 
y^\op \hs e_{nkl} = q^n \hs e_{nkl}\, . 
 $$
 Let $u\in C(\S)$, $u(\E{t})=\E{t}$, denote the unitary generator of $C(\S)$. Setting 
 \begin{align*}
  c&:= y \ot u = \sqrt{1-zz^*} \ot u , & d&:= z \ot 1,  \\
  c^\op &:= y^\op \ot u^* = \sqrt{1- z^\op\hs  z^{*\op}}\ot u^*, &  d^\op  &:= q^{\alpha/2} z^\op \ot 1 , 
 \end{align*}
yields  commuting *-representations of $\CSU$ and $\CSU^\op$ on $L_2(\UD_q)\,\bar\ot\, L_2(\S)$ 
 given by left and right multiplication, respectively.  
 On basis vectors, these representations read 
 \begin{align*}
 c \hs e_{nkl} &= q^k\hs e_{nk,l+1}\, ,    & d \hs e_{nkl}  &=   \sqrt{1-q^{2(k+1)}} \, e_{n,k+1,l}\, , \\
c^\op \hs e_{nkl} &= q^n \hs e_{nk,l-1}\, ,  &  d^\op \hs e_{nkl}   &=   \sqrt{1-q^{2n}}\,  e_{n-1,kl}\, . 
 \end{align*}
 The partial derivatives $\dz$, $\dbz$ and $\dt$ act on differentiable functions $g\ot \phi\in \dFq\ot C^{(1)}(\S)$ by 
 $\dz (g\ot \phi) = \dz g \ot \phi$, \,$\dbz (g\ot \phi) = \dbz g \ot \phi$ and $\dt (g\ot \phi) = g \ot \dt \phi$. 
 On basis elements, one obtains 
 \begin{align*}
\dz\hs e_{nkl} &=  \mbox{$\frac{q^{-2k}}{1-q^2}$}\hs  \big( 
q^2 \hs  \sqrt{1-q^{2k}} \hs \eta_{n,k-1,l}   - q^{\alpha/2}\hs  \sqrt{1-q^{2(n+1)}}\, e_{n+1,kl} \big) ,\\
\dbz\hs e_{nkl} &= -  \mbox{$\frac{q^{-2k}}{1-q^2}$}\hs  \big( 
q^{-2} \sqrt{1-q^{2(k+1)}} \hs \eta_{n,k+1,l}   - q^{-\alpha/2}  \sqrt{1-q^{2n}}\, e_{n-1,kl} \big) , \\
\dt \hs e_{nkl} &= \im \hs l\hs e_{nkl} \, . 
 \end{align*}
\end{thm}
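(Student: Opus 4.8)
The statement collects formulas derived throughout the preceding sections and transports them to the tensor product $L_2(\UD_q)\ob L_2(\S)$. The guiding principle is that the inner product on $\Fq\ot L_\infty(\S)$ factorizes as the product of the disc inner product from \eqref{ip} and the standard $L_2(\S)$ inner product; hence the Hilbert space completion is the Hilbert space tensor product, and an orthonormal basis arises by tensoring an orthonormal basis of $L_2(\UD_q)$ with the Fourier basis $\{b_l\}_{l\in\Z}$ of $L_2(\S)$. The plan is therefore to first establish the basis, then read off the actions of the disc operators and their opposites (which act on the first tensor factor only), and finally assemble the representations of $\CSU$ and $\CSU^\op$ together with the three differential operators.

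First I would invoke Proposition~\ref{P}, which provides the orthonormal basis $\{e_{nk}:n,k\in\N\}$ of $L_2(\UD_q)$ obtained from $\{\eta_{nk}\}$ via $e_{nk}=\eta_{n,k-n}$. Tensoring with $\{b_l\}$ yields the claimed orthonormal basis $e_{nkl}=e_{nk}\ot b_l$, whose explicit form matches \eqref{basis} by \eqref{deta}; that each $e_{nkl}$ lies in $\dFq\ot C^{(1)}(\S)$ was noted after \eqref{dFq} for the disc factor, while $\E{lt}\in C^{(1)}(\S)$ is clear. Since $z,z^*,y$ act only on the disc factor, their actions on $e_{nkl}$ are read directly from the displayed formula for $y\hs e_{nk}$ and from \eqref{ze} and \eqref{zestar}. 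The opposite operators $z^\op,z^{*\op},y^\op$ likewise act on the first factor only, so their formulas follow from \eqref{yop}, \eqref{zeop} and \eqref{zestarop}; boundedness and the failure of the $*$-property for the $\Uq^\op$-representation are inherited from the disc case via \eqref{opstar}, and the identity $z^{*\op}=q^\alpha(z^\op)^*$ was recorded after \eqref{zetaopstar}.

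For the representations of $\CSU$ and $\CSU^\op$ I would combine the disc actions with the shift action of $u$ and $u^*$ on the Fourier basis, namely $u\hs b_l=b_{l+1}$ and $u^*\hs b_l=b_{l-1}$ from \eqref{u}. With $c,d$ as in \eqref{repcd} and $c^\op,d^\op$ as in \eqref{repcdop}, the basis formulas follow at once, reproducing \eqref{cd} and \eqref{cdop} on each summand of the orthogonal decompositions indexed by \eqref{Hn} and \eqref{Hop}. The $*$-representation property for $\CSU$ is immediate because $y=\sqrt{1-zz^*}$ is a $*$-representation of the disc and $u$ is unitary; for $\CSU^\op$ it is secured by the normalization $d^\op=q^{\alpha/2}z^\op=\zeta^\op\ot 1$, which makes $\zeta^\op$ satisfy the opposite relation \eqref{zetaop} with $(\zeta^\op)^*=\zeta^{*\op}$. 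Faithfulness in both cases follows since each representation is an orthogonal sum of copies of the faithful irreducible representation \eqref{cd}, respectively \eqref{cdop}, and commutativity of the two representations is a consequence of \eqref{fgop} together with the commutativity of $u,u^*$ in $C(\S)$.

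Finally, the differential operators factorize on the tensor product: $\dz$ and $\dbz$ act on the disc factor through the commutator formulas \eqref{dzdbz}, whose effect on the $e_{nk}$ was already computed immediately before the theorem, while $\dt$ acts on $C^{(1)}(\S)$ by $\dt\hs b_l=\im\hs l\hs b_l$, i.e.\ the relation $-\im\dt\hs b_l=l\hs b_l$ from the start of Section~3. The hard part is not any single deep argument but the careful bookkeeping of the twisting factors $q^{\pm\alpha/2}$ and the modular automorphism $\sa$: one must check that the normalization distinguishing $z^\op$ from $\zeta^\op$ is precisely what converts the right-multiplication representation into a genuine $*$-representation of $\CSU^\op$. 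This is the only point at which the structure on the tensor product is more than a routine transcription of the disc results, and I would verify it explicitly against \eqref{zetaop} and \eqref{opstar}.
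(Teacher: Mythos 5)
Your proposal is correct and follows essentially the same route as the paper: the theorem is a summary statement whose proof is the assembly of Proposition~\ref{P}, the basis change $e_{nk}=\eta_{n,k-n}$, and the explicit computations \eqref{ze}--\eqref{zestarop}, \eqref{repcd}--\eqref{cdop} and the commutator formulas \eqref{dzdbz}, all tensored with the Fourier basis of $L_2(\S)$. Your identification of the $q^{\alpha/2}$ normalization of $d^\op$ as the one genuinely non-routine point matches the paper's own emphasis.
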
 

Note that the representations of $\CSU$ and $\CSU^\op$ do not depend on $\alpha$ 
but the actions of $\dz$ and $\dbz$ do so. 

\section*{Acknowledgements}  
The author thanks Ulrich Kr\"ahmer and Andrzej Sitarz for interesting discussions on the subject, and 
acknowledges financial support from the Polish Government grant 3542/H2020/2016/2, 
the EU funded grant H2020-MSCA-RISE-2015-691246-QUANTUM DYNAMICS and 
CIC-UMSNH. 


\end{document}